\def\subjclass#1{{\renewcommand{\thefootnote}{}%
\footnote{\emph{Mathematics Subject Classification (2010):} #1}}}
\DeclareMathOperator{\Curl}{Curl}
\DeclareMathOperator{\curl}{curl}
\DeclareMathOperator{\divg}{div}
\date{\today}
\theoremstyle{plain}
\newtheorem{Thm}{Theorem}
\newtheorem{Rem}[Thm]{Remark}
\newtheorem{Lem}[Thm]{Lemma}
\newcommand {\p}{\partial}
\newcommand{\q}{\quad}
\def\0{\mathbf 0}
\def\a{\alpha}
\def\d{\nabla}
\def\lam{\lambda}
\def\O{\Omega}
\def\R{\mathbb R}
\def\h{\mathbf h}
\def\u{\mathbf u}
\numberwithin{equation}{section}
\numberwithin{Thm}{section}
\begin{document}
\large
%Topmatter

\title[Applications of a formula on Beltrami flow]{Applications of a formula on Beltrami flow}

\author[]{Yong Zeng}
\author[]{Zhibing Zhang}

\address{Yong Zeng: School of Mathematics and Statistics, Chongqing Technology and Business University, Chongqing 400067, PR China; }
\email{zengyong0702@126.com}

\address{Zhibing Zhang: School of Mathematics and Physics, Anhui University of Technology, Ma'anshan 243032, PR China; }
\email{zhibingzhang29@126.com}

\thanks{}

\keywords{Liouville-type theorem, Beltrami flow, star-shaped domain, the first Maxwell eigenvalue, the first Stokes eigenvalue}

\subjclass{35B53; 35P15}

\begin{abstract}
In this note, we obtain uniqueness results for Beltrami flow in both bounded and unbounded domain with nonempty boundary by establishing an elementary but useful formula involving operators $\divg$ and $\curl$. We also use this formula to deal with Maxwell and Stokes eigenvalue problems.

\end{abstract}
\maketitle
%end topmatter

\section{Introduction}

In this note we study the {\it Beltrami flow}, that is, a vector field $\u$ which satisfies the system
\begin{equation}\label{eq-bel}
  \left\{\begin{aligned}
  &\curl\u\times\u=\0 & \text{in }\O,\\
  &\divg\u=0 &\text{in }\O.
  \end{aligned}\right.
\end{equation}
By establishing an elementary but useful identity, we obtain uniqueness results for Beltrami flow in both bounded and unbounded domain with nonempty boundary. As another interesting application, we use this identity to deal with Maxwell and Stokes eigenvalue problems.

Since $\curl\u\times\u=(\u\cdot\d)\u-\d|\u|^2/2$, each Beltrami flow will then give a special solution to the stationary Euler system. We refer the reader to \cite{Arnold} for the basic properties of Beltrami flows. For some recent results see \cite{EP12,EP15,EP16,EPS16,Pan2016} and references therein. We mention here that the Beltrami flows are also called {\it force-free magnetic fields} in magnetohydrodynamics, since the term $\curl\u\times\u$ models the Lorentz force when $\u$ represents the magnetic field, see \cite{Ch,ChKe,Va} and references therein.

In~\cite{EP15}, the authors constructed  Beltrami fields which satisfy $\curl\u=\lam\u$ in $\R^3$ for nonzero constant $\lam$ and fall off as $|\u(x)|<C|x|^{-1}$ at infinity. In particular, they are in $L^p(\R^3,\R^3)$ for all $p>3$. A similar result can be also found in \cite{LLZ2015}. Recently, N. Nadirashvili \cite{Nadi14} proved a Liouville-type theorem for the globally defined Beltrami flow. He proved that when $\O=\R^3$, a $C^1$ Beltrami flow satisfying either $\u\in L^p(\R^3,\R^3)$, $p\in[2,3]$ or $|\u(x)|=o(|x|^{-1})$ as $|x|\to+\infty$ is in fact trivial, i.e., $\u\equiv \0$ in $\R^3$. In \cite{CC15}, D. Chae and P. Constantin gave a new and elementary proof to a similar result which partially covers the result of N. Nadirashvili. In \cite{ChW2016-2}, D. Chae and J. Wolf succeeded in covering the result of N. Nadirashvili and got some improvements. Concerned with the exterior problem, A. Enciso, D. Poyato and J. Soler \cite{EPS16} considered a related system
$$\left\{\aligned
&\curl\u=f\u &\text{in }\R^3\backslash\overline{\O},\\
&\divg\u=0 &\text{in }\R^3\backslash\overline{\O},
\endaligned\right.
$$
for some  $f$ which is a compactly supported perturbation of a constant proportionality factor $\lambda\in\R\backslash\{0\}$, i.e., $f=\lambda+\varphi$ for some $\varphi\in C^{k,\a}_c(\R^3\backslash\O,\R^3)$, where $\O$ is a $C^{k+1,\a}$ bounded domain homeomorphic to an Euclidean ball. They showed that if $\u\in C^{k+1,\a}(\R^3\backslash\O,\R^3)$ is transverse to $\p\O$ at some point outside the support of $\varphi$, then there exists no $\varepsilon>0$ such that $|\u(x)|=O(|x|^{-1-\varepsilon})$ as $|x|\to +\infty$, otherwise $\u\equiv\0$.

\section{Main Results}
We consider the Beltrami flow with vanishing tangential or vanishing normal boundary condition, and obtain uniqueness results for the Beltrami flow in both bounded and unbounded domain with nonempty boundary. The first result is for the bounded domain.
\begin{Thm}\label{ZZ1}
Let $\Omega\subset\R^3$ be a bounded and star-shaped $C^1$ domain.
Then the system
\begin{equation}
\begin {cases}
\curl\mathbf{u}\times\mathbf{u}=\mathbf{0}& \text{ \rm in } \Omega,\\
\divg\mathbf{u}=0 &\text{ \rm in } \Omega,\\
\mathbf{u}\times\nu=\mathbf{0} &\text{ \rm on } \partial\Omega,\\
\end {cases}
\end{equation}
admits only the trivial solution in $H^1(\O,\R^3)$.
\end{Thm}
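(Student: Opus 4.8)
The plan is to derive the conclusion from a Rellich--Pohozaev type integral identity built on the multiplier $x$ (the position vector), the star-shapedness entering only through the sign of $x\cdot\nu$ on $\pO$. The algebraic input is the pointwise formula
\begin{equation*}
\divg\!\Big((x\cdot\u)\,\u-\tfrac12|\u|^2\,x\Big)=-\tfrac12|\u|^2+(x\cdot\u)\,\divg\u+x\cdot\big(\curl\u\times\u\big),
\end{equation*}
which should be (a particular case of) the elementary formula advertised in the abstract; it follows by a short computation from $\divg x=3$ and the identity $\curl\u\times\u=(\u\cdot\d)\u-\tfrac12\d|\u|^2$ recalled in the introduction. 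For a solution of our system one has $\divg\u=0$ and $\curl\u\times\u=\0$, so the right-hand side reduces to $-\tfrac12|\u|^2$.

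Integrating over $\O$ and applying the divergence theorem then gives
\begin{equation*}
\int_{\pO}\Big[(x\cdot\u)(\u\cdot\nu)-\tfrac12|\u|^2(x\cdot\nu)\Big]\,dS=-\tfrac12\int_\O|\u|^2\,dx .
\end{equation*}
The tangential boundary condition $\u\times\nu=\0$ forces $\u=(\u\cdot\nu)\,\nu$ on $\pO$, so on $\pO$ we have $x\cdot\u=(\u\cdot\nu)(x\cdot\nu)$ and $|\u|^2=(\u\cdot\nu)^2$; substituting, the boundary integrand collapses to $\tfrac12(\u\cdot\nu)^2(x\cdot\nu)$, and hence
\begin{equation*}
\int_{\pO}(\u\cdot\nu)^2(x\cdot\nu)\,dS=-\int_\O|\u|^2\,dx .
\end{equation*}
Translating so that $\O$ is star-shaped with respect to the origin, we have $x\cdot\nu\ge0$ on $\pO$; therefore the left side is $\ge0$ while the right side is $\le0$, both sides must vanish, and so $\u\equiv\0$ in $\O$.

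The one point needing care---indeed essentially the only obstacle---is to justify the above at the stated regularity $\u\in H^1(\O,\R^3)$ on a merely $C^1$ domain, since the multiplier field $(x\cdot\u)\u-\tfrac12|\u|^2x$ lies only in $L^3(\O)$ with distributional divergence in $L^{3/2}(\O)$, so the pointwise formula and the divergence theorem are not literally available. I would handle this by density: take $\u_j\in C^\infty(\ol\O,\R^3)$ with $\u_j\to\u$ in $H^1(\O)$ (possible since $\O$ is $C^1$), apply the smooth identity to each $\u_j$ (no PDE being required of the approximants), and pass to the limit, using $\u_j\to\u$ in $L^6(\O)$ and $\divg\u_j,\curl\u_j\to\divg\u,\curl\u$ in $L^2(\O)$ for the interior terms, and the trace convergence $\u_j\to\u$ in $L^4(\pO)$ --- coming from $H^1(\O)\to H^{1/2}(\pO)\hookrightarrow L^4(\pO)$ in dimension three --- for the boundary term; this same embedding is what makes $(\u\cdot\nu)^2(x\cdot\nu)$ a genuine $L^1(\pO)$ function. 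It is worth noting that the Beltrami constraint $\curl\u\times\u=\0$ is algebraic rather than elliptic and so does not upgrade the regularity of $\u$ on its own, which is why the limit is taken on the integral identity and not on $\u$ itself.
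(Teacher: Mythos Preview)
Your proof is correct and follows essentially the same route as the paper: a Rellich--Pohozaev identity with the multiplier $x$, the boundary condition $\u\times\nu=\0$ to simplify the surface term, and star-shapedness to fix the sign of $x\cdot\nu$. The paper obtains the identity as the special case $\varphi=\tfrac12|x|^2$ of a more general formula (their Lemma~\ref{Lem-2.2}/\ref{one-ball}), whereas you write the divergence form directly; you also supply the density/trace argument for $H^1$ solutions that the paper leaves implicit.
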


\begin{Rem}
$(\rm i)$ S. I. Vainshtein \cite[p.180]{Va} $($see also \cite[p.5638]{CDGT}$)$ showed that the system
\begin{equation}
\begin {cases}
\curl\mathbf{u}=\lambda\mathbf{u}& \text{ \rm in } \Omega,\\
\divg\mathbf{u}=0 &\text{ \rm in } \Omega,\\
\mathbf{u}\times\nu=\mathbf{0} &\text{ \rm on } \partial\Omega,\\
\end {cases}
\end{equation}
admits only the trivial solution, provided that $\O$ is bounded and $\lam$ is a nonzero constant. Our result can be viewed as a generalization to Vainshtein's result since we deal with a more general system, even though our condition on the domain is more restricted.

$(\rm ii)$
If we replace the boundary condition $\mathbf{u}\times\nu=\mathbf{0}$ on $\p\O$ by $\mathbf{u}\cdot\nu=0$ on $\p\O$, then Theorem \ref{ZZ1} does not hold any more. Indeed, let $(\lambda_k,\u_k)$ be the eigen-pairs for the operator $curl$
\begin{equation*}
\begin {cases}
\curl\mathbf{u}_k=\lambda_k\mathbf{u}_k& \text{ \rm in } \Omega,\\
\divg\mathbf{u}_k=0 &\text{ \rm in } \Omega,\\
\mathbf{u}_k\cdot\nu=0 &\text{ \rm on } \partial\Omega,\\
\end {cases}
\end{equation*}
whose existence is ensured by Z. Yoshida and Y. Giga \cite{YG90}, then each $\u_k$ gives a nontrivial solution to the system
\begin{equation*}
\begin {cases}
\curl\mathbf{u}\times\mathbf{u}=\mathbf{0}& \text{ \rm in } \Omega,\\
\divg\mathbf{u}=0 &\text{ \rm in } \Omega,\\
\mathbf{u}\cdot\nu=0 &\text{ \rm on } \partial\Omega.\\
\end {cases}
\end{equation*}

\end{Rem}

The second result is a Liouville-type theorem for the Beltrami flow in unbounded domains.

\begin{Thm}\label{ZZ2}
  Let  $\Omega\subset\R^3$ be an unbounded domain with a $C^1$ boundary and $\R^3\backslash\overline{\Omega}\neq\emptyset$. Assume that $\u\in C^1(\Omega,\R^3)\cap C^0(\overline{\Omega},\R^3)$ is a solution to \eqref{eq-bel} such that $|\u(x)|=o(|x|^{-1})$ as $|x|\to +\infty$. If additionally,
  \begin{itemize}
    \item[(i)]  $\Omega$ is a star-shaped domain and $\u\times\nu=\0$ on $\p\O$, or
    \item[(ii)] $\Omega=\R^3\backslash\overline{D}$, where $D$ is a star-shaped domain, and $\u\cdot\nu=0$ on $\p\O$,
  \end{itemize}
then we must have $\u\equiv\0$ in $\O$.
\end{Thm}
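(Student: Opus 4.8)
The plan is to integrate an elementary divergence identity over the truncations $\O\cap B_R$, to use the boundary condition together with star-shapedness to control the resulting integral over $\pO$, and to convert the inequality that comes out for $I(R):=\int_{\O\cap B_R}|\u|^2\,dx$ into a rigidity statement via the decay hypothesis. Recall that for any $C^1$ vector field one has
\[
\divg\!\Big((\u\cdot x)\,\u-\tfrac12|\u|^2\,x\Big)=-\tfrac12|\u|^2+(\u\cdot x)\,\divg\u-x\cdot(\curl\u\times\u),
\]
so for a Beltrami flow this reduces to $\divg\F=-\tfrac12|\u|^2$, where $\F:=(\u\cdot x)\,\u-\tfrac12|\u|^2\,x$. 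After a translation I may assume $\O$ (in case (i)), respectively $D$ (in case (ii)), is star-shaped with respect to the origin. For all large $R$ the divergence theorem applies on the bounded set $\O\cap B_R$ (which is Lipschitz for almost every such $R$; Gauss--Green is legitimate in any case since $\u\in C^0(\overline\O)$ makes $\F$ and $\divg\F=-\tfrac12|\u|^2$ continuous up to the boundary), giving
\[
-\tfrac12\,I(R)=\int_{\pO\cap B_R}\F\cdot\nu\,dS+\int_{\O\cap\partial B_R}\F\cdot\tfrac{x}{R}\,dS .
\]

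The next step is to show the first integral is nonnegative. In case (i), $\u\times\nu=\0$ forces $\u=(\u\cdot\nu)\,\nu$ on $\pO$, whence $\F\cdot\nu=\tfrac12(\u\cdot\nu)^2\,(x\cdot\nu)$. In case (ii), $\u\cdot\nu=0$ on $\pO=\partial D$ gives $\F\cdot\nu=-\tfrac12|\u|^2\,(x\cdot\nu)=\tfrac12|\u|^2\,(x\cdot\n_D)$, where $\n_D=-\nu$ is the outer normal of $D$. In either case the factor $x\cdot\nu$, respectively $x\cdot\n_D$, is nonnegative on the boundary because the domain is star-shaped about the origin, so $\int_{\pO\cap B_R}\F\cdot\nu\,dS\ge0$. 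Since on $\partial B_R$ one has $\F\cdot\tfrac{x}{R}=\tfrac{(\u\cdot x)^2}{R}-\tfrac{R}{2}|\u|^2$, the identity above then yields
\[
\tfrac12\,I(R)\le\int_{\O\cap\partial B_R}\Big(\tfrac{R}{2}|\u|^2-\tfrac{(\u\cdot x)^2}{R}\Big)dS\le\tfrac{R}{2}\int_{\O\cap\partial B_R}|\u|^2\,dS .
\]

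Finally I would use this estimate twice. By the coarea formula $I\in C^1$ with $I'(R)=\int_{\O\cap\partial B_R}|\u|^2\,dS$, so the last display reads $I(R)\le R\,I'(R)$; equivalently $R\mapsto I(R)/R$ is nondecreasing, so if $\u\not\equiv\0$ then $I(R_0)>0$ for some $R_0$ and hence $I(R)\ge\big(I(R_0)/R_0\big)R$ for all $R\ge R_0$. On the other hand $I'(R)\le 4\pi R^2\sup_{x\in\O,\,|x|=R}|\u(x)|^2$, so $I(R)\le R\,I'(R)\le 4\pi R\cdot\sup_{|x|=R}\big(|x|^2|\u(x)|^2\big)=o(R)$ by the hypothesis $|\u(x)|=o(|x|^{-1})$. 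These two conclusions are incompatible, hence $I\equiv0$, i.e.\ $\u\equiv\0$ in $\O$; this handles both (i) and (ii).

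The difficulties I expect are technical rather than conceptual: justifying Gauss--Green up to the (possibly unbounded) $C^1$ boundary where $\u$ is merely continuous, and the coarea bookkeeping identifying $I'(R)$ with the spherical integral. The conceptual heart---short once the formula is in hand---is that the differential inequality $I\le R\,I'$ by itself forces at least linear growth of $I$, which the borderline decay $o(|x|^{-1})$ excludes; note that the sign of the $\pO$-integral, and thus the whole argument, rests on the interplay between the boundary condition and star-shapedness, consistently with the Remark showing that the statement fails without it.
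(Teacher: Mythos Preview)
Your argument is correct (up to two harmless slips: the sign of the $x\cdot(\curl\u\times\u)$ term in the divergence identity should be $+$, and $I$ is absolutely continuous rather than necessarily $C^1$; neither affects the proof since that term vanishes for Beltrami flows and absolute continuity suffices for the monotonicity step). The route, however, is genuinely different from the paper's.

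The paper applies the identity of Lemma~\ref{one-ball} with weight $x/|x|$ (i.e.\ $\alpha=1$) on $\O\cap B_R$ and argues in several stages: first the inequality gives integrability and then vanishing of $\int_\O |x|^{-1}(\u\cdot x/|x|)^2\,dx$, hence $\u\cdot x\equiv0$; feeding this back into the identity forces $|\u|^2(x\cdot\nu)\equiv0$ on $\pO$ and then $\u\equiv\0$ on each sphere $\partial B_R\cap\O$; a final application with some $\alpha\in(1,3)$ concludes $\u\equiv\0$. Your proof instead uses the unweighted ($\alpha=0$) identity $\divg\big((\u\cdot x)\u-\tfrac12|\u|^2 x\big)=-\tfrac12|\u|^2$, extracts the differential inequality $I(R)\le R\,I'(R)$, and closes by the monotonicity of $I(R)/R$ against the bound $I(R)=o(R)$ coming from the decay. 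This is shorter and avoids the bootstrap through $\u\cdot x\equiv0$; it is in spirit the Chae--Constantin argument for $\R^3$, adapted to domains with boundary by your observation that the $\pO$-contribution $\int_{\pO\cap B_R}\F\cdot\nu\,dS$ has a sign under either boundary condition via star-shapedness. The paper's approach, on the other hand, stays inside the single weighted-identity framework of Lemma~\ref{one-ball} that also drives Theorems~\ref{ZZ1} and~\ref{ZZ3}, so it is more uniform with the rest of the text and yields the intermediate structural facts ($\u\cdot x\equiv0$, $\u\equiv\0$ on spheres) along the way.
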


\begin{Rem}
$(\rm i)$
Let $\u$ be a Beltrami flow in $\R^3_+$ with $|\u(x)|=o(|x|^{-1})$ as $|x|\to+\infty$.  Since the half space $\R^3_+=\{x\in\R^3, x_3>0\}$ is  both a star-shaped domain and  the complement of the closure of the star-shaped domain $\R_-^3=\{x\in\R^3, x_3<0\}$, therefore, either $\u\cdot\nu=0$ on $\{x_3=0\}$ or $\u\times\nu=\0$ on $\{x_3=0\}$ will give $\u\equiv \0$ in $\R^3_+$.

$(\rm ii)$ Note that the star-shaped domain $D$ in our Theorem~\ref{ZZ2} can be both bounded and unbounded, thus it contains some domains that can not be covered by the results in \cite{EPS16}.
\end{Rem}

The proofs of Theorem \ref{ZZ1} and \ref{ZZ2} are based on the formula established in Lemma \ref{one-ball}. Another interesting application of this formula is to compare the first Maxwell eigenvalues $\alpha_1,\beta_1$ with the first Stokes eigenvalue $\gamma_1$. The definitions of $\alpha_1,\beta_1,\gamma_1$ can be found in Section 2. For three-dimensional bounded and star-shaped domain, we find that the first Maxwell eigenvalues are strictly smaller than the first Stokes eigenvalue. Moreover, we have

\begin{Thm}\label{ZZ3}
Let~$\O$~be a bounded and star-shaped $C^{1,1}$ domain in $\mathbb{R}^3$. It holds that
\begin{itemize}
    \item[(i)] $\alpha_1=\beta_1<\gamma_1$.
    \item[(ii)] Let $\u$ be a first Maxwell eigenfunction under tangent boundary condition. Then $\u$ satisfies $\u\cdot\nu\not\equiv 0$ and $\curl\u\times\nu\not\equiv\0$ on $\p\O$.
    \item[(iii)] Let $\u$ be a first Maxwell eigenfunction under normal boundary condition. Then $\u$ satisfies $\u\times\nu\not\equiv \0$ and $\curl\u\cdot\nu\not\equiv0$ on $\p\O$.
    \item[(iv)] Let $\u$ be a first Stokes eigenfunction. Then $\u$ satisfies $\curl\u\times\nu\not\equiv\0$ on $\p\O$.
\end{itemize}
\end{Thm}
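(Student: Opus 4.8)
The plan is to analyse the Euler--Lagrange systems behind the three variational quantities. Recall that a first Maxwell eigenfunction under the tangent condition is a divergence-free field $\u$ with $\curl\curl\u=\alpha_1\u$ in $\O$ and $\u\times\nu=\0$ on $\pO$; under the normal condition it is a divergence-free $\u$ with $\curl\curl\u=\beta_1\u$ in $\O$, $\u\cdot\nu=0$ and the natural condition $\curl\u\times\nu=\0$ on $\pO$; and a first Stokes eigenfunction solves $-\Delta\u+\nabla p=\gamma_1\u$, $\divg\u=0$ in $\O$, $\u=\0$ on $\pO$. I would first establish $\alpha_1=\beta_1$ via the map $\u\mapsto\curl\u$. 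If $\u$ is a tangent eigenfunction, then $\w:=\curl\u$ is divergence-free, solves $\curl\curl\w=\alpha_1\w$, and on $\pO$ satisfies $\w\cdot\nu=\curl\u\cdot\nu=0$ (the normal component of $\curl\u$ on $\pO$ is determined by the tangential trace $\u\times\nu$, hence vanishes) and $\curl\w\times\nu=\alpha_1\,\u\times\nu=\0$; thus $\w$, which is nonzero (else $\u=\0$), competes in the normal problem with Rayleigh quotient $\alpha_1$, so $\beta_1\le\alpha_1$. Conversely, if $\u$ is a normal eigenfunction then $\curl\u$ is divergence-free, solves $\curl\curl(\curl\u)=\beta_1\curl\u$, and has $\curl\u\times\nu=\0$ on $\pO$ by the natural condition, so it competes in the tangent problem and $\alpha_1\le\beta_1$.

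Next, $\alpha_1\le\gamma_1$ is immediate: every divergence-free $\u\in H_0^1(\O,\R^3)$ lies in $H_0(\curl,\O)$ and satisfies $\int_\O|\nabla\u|^2=\int_\O|\curl\u|^2$, so the Stokes trial space embeds into the Maxwell tangent trial space with unchanged Rayleigh quotient. For the strict inequality I would take a first Stokes eigenfunction $\u$ and perturb it inside the Maxwell tangent space by a divergence-free $\h$ with $\h\times\nu=\0$ on $\pO$. A short integration by parts, using $\curl\curl\u=\gamma_1\u-\nabla p$, $\u=\0$ on $\pO$, and the fact that $\h$ is parallel to $\nu$ on $\pO$ (so that the boundary term $\int_{\pO}(\curl\u\times\h)\cdot\nu$ vanishes), yields
\[
\frac{d}{dt}\Big|_{t=0}\frac{\int_\O|\curl(\u+t\h)|^2}{\int_\O|\u+t\h|^2}=-\frac{2}{\int_\O|\u|^2}\int_{\pO}p\,(\h\cdot\nu)\,dS .
\]
When $p|_{\pO}$ is not constant, choosing $\h\cdot\nu=p-\overline p$ makes the right side negative, so the Maxwell tangent infimum is strictly below $\gamma_1$, i.e.\ $\alpha_1<\gamma_1$. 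When $p|_{\pO}$ is constant, taking the divergence of the Stokes equation gives $\Delta p=0$, so $p$ is globally constant; then $-\Delta\u=\gamma_1\u$ and $\gamma_1$ is a Dirichlet--Laplacian eigenvalue possessing a divergence-free eigenvector, and a separate argument (a Filonov-type trial-field construction giving $\alpha_1<\lambda_1^{\mathrm{Dir}}\le\gamma_1$) is needed to exclude $\alpha_1=\gamma_1$. This degenerate case is the main obstacle: it genuinely occurs (for instance on a ball, where a first Stokes eigenfunction has the form $f(r)\,(\mathbf b\times\mathbf x)$ with zero pressure), so the clean first-variation argument alone does not settle it.

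Granting $\alpha_1=\beta_1<\gamma_1$, parts (ii)--(iii) follow by short contradiction arguments. If a first tangent eigenfunction $\u$ had $\u\cdot\nu\equiv0$, then (with $\u\times\nu=\0$) $\u=\0$ on $\pO$, so $-\Delta\u=\curl\curl\u=\alpha_1\u$ with $\divg\u=0$ and $\u=\0$ on $\pO$: this is a Stokes eigenfunction with eigenvalue $\alpha_1<\gamma_1$, impossible. If $\curl\u\times\nu\equiv\0$, then $\w:=\curl\u$ also has $\w\cdot\nu=0$, hence $\w=\0$ on $\pO$; since $\curl\curl\w=\curl(\alpha_1\u)=\alpha_1\w$ and $\divg\w=0$, either $\w$ is again a forbidden Stokes eigenfunction or $\w=\0$, the latter giving $\alpha_1\u=\curl\curl\u=\0$ and so $\u=\0$. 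Part (iii) is the same computation with a first normal eigenfunction $\u$ and $\z:=\curl\u$, now using the natural condition $\curl\u\times\nu=\0$ in place of $\u\times\nu=\0$.

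Finally, for (iv) I would use a Rellich--Pohozaev identity (in the spirit of Lemma~\ref{one-ball}) for a first Stokes eigenfunction $\u$. Testing $-\Delta\u+\nabla p=\gamma_1\u$ with $(\mathbf x\cdot\nabla)\u$ and integrating by parts, the pressure drops out because $(\mathbf x\cdot\nabla)\u=(\mathbf x\cdot\nu)\,\partial_\nu\u$ on $\pO$ has no normal component, leaving $\int_{\pO}(\mathbf x\cdot\nu)|\partial_\nu\u|^2\,dS=2\gamma_1\int_\O|\u|^2\,dx$. A pointwise computation on $\pO$, using $\u=\0$ there and $\divg\u=0$, shows $\curl\u\times\nu=\partial_\nu\u$ on $\pO$. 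Hence $\int_{\pO}(\mathbf x\cdot\nu)|\curl\u\times\nu|^2\,dS=2\gamma_1\int_\O|\u|^2>0$, and since $\O$ is star-shaped about the origin the weight $\mathbf x\cdot\nu$ is nonnegative on $\pO$ and positive on a set of positive measure; therefore $\curl\u\times\nu\not\equiv\0$. The same mechanism, applied to the Beltrami fields $\sqrt{\alpha_1}\,\u\pm\curl\u$ attached to a tangent eigenfunction, gives $\int_{\pO}(\mathbf x\cdot\nu)\big(|\curl\u\times\nu|^2-\alpha_1(\u\cdot\nu)^2\big)\,dS=2\alpha_1\int_\O|\u|^2>0$ — an alternative, formula-based proof of $\curl\u\times\nu\not\equiv\0$ in (ii), with a normal analogue yielding $\u\times\nu\not\equiv\0$ in (iii).
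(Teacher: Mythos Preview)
Your argument has a genuine gap in part (i), precisely at the point you yourself flag: the strict inequality $\alpha_1<\gamma_1$ when the first Stokes eigenfunction has constant pressure. Your first-variation computation is correct and shows that if $\alpha_1=\gamma_1$ then any first Stokes eigenfunction $\u$ is a critical point of the Maxwell--tangent Rayleigh quotient on $A$, forcing $\int_{\pO}p\,(\h\cdot\nu)\,dS=0$ for all admissible $\h$; granting that the normal traces of fields in $A$ exhaust mean-zero boundary data (a point that itself needs a line of justification), this gives $p\equiv\text{const}$ and $-\Delta\u=\gamma_1\u$. But there you stop, invoking an unspecified ``Filonov-type'' inequality $\alpha_1<\lambda_1^{\mathrm{Dir}}$ that you do not prove and that is not available in the literature in this form for the 3D Maxwell problem. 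Since the constant-pressure case genuinely occurs (the ball, as you note), this is a real gap, not a technicality; and because your proofs of (ii) and (iii) rely on (i), the gap propagates. Notice also that your first-variation route never uses star-shapedness, which should already signal that something is missing.

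The paper closes (i) directly with Lemma~\ref{one-ball} (the case $\a=0$, i.e.\ $\varphi=\tfrac12|x|^2$), and this is exactly where star-shapedness enters. If $\alpha_1=\beta_1=\gamma_1$ and $\u$ is a first Stokes eigenfunction, then $\u$ is also a first normal-Maxwell eigenfunction, so $\curl\curl\u=\beta_1\u$ with $\u\cdot\nu=0$ and $\curl\u\times\nu=\0$ on $\pO$. Applying the identity \eqref{weight-0} once to $\u$ (first form, using $\u\cdot\nu=0$) and once to $\curl\u$ (second form, using $\curl\u\times\nu=\0$), and combining via $\curl\curl\u\times\curl\u=-\beta_1\,\curl\u\times\u$, yields
\[
\int_\O\big(|\curl\u|^2+\beta_1|\u|^2\big)\,dx+\int_{\pO}|\curl\u|^2(x\cdot\nu)\,dS=\beta_1\int_{\pO}|\u|^2(x\cdot\nu)\,dS.
\]
Since $\u=\0$ on $\pO$ and $x\cdot\nu\ge0$ by star-shapedness, the right side vanishes while the left is at least $\beta_1\int_\O|\u|^2>0$ --- contradiction. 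This single Pohozaev-type identity handles the constant- and nonconstant-pressure cases uniformly. The same two-step application of the formula gives the paper's proofs of the second claims in (ii) and (iii); your reductions of those to ``forbidden Stokes eigenfunctions'' are fine once (i) is secured, and your direct Rellich--Pohozaev argument for (iv) (via the pointwise identity $\curl\u\times\nu=\partial_\nu\u$ on $\pO$) is a legitimate variant of the paper's route. The Beltrami-field observation in your last sentence is in fact the paper's mechanism in disguise; had you applied it to the Stokes eigenfunction in the degenerate case you would have recovered the missing step.
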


This paper is organized as follows. Section 3 contains some notations, definitions and two useful lemmas. In Section 4, we show the proof of Theorem~\ref{ZZ1} and \ref{ZZ2}. In Section 5, we deal with the proof of Theorem~\ref{ZZ3}.

\section{Preliminaries}

We say a domain $D\subset\R^3$ is a star-shaped domain if there exists a point $x_0\in D$ such that for every $x\in \overline{D}$ the line segment $\overline{x_0x}=\{t x_0+(1-t) x, t\in [0,1]\}$ lies in $\overline{D}$. For simplicity, we may assume that $x_0$ is the origin point in the sequel. Note that if the star-shaped domain $D$ is of class $C^1$, we have $x\cdot \nu(x)\geq 0$ for all $x\in \p D$, where $\nu$ denotes the unit outer normal to the boundary. For the proof of such property, we refer to the Lemma (Normals to a star-shaped region) in
\cite[p.515]{Evans}.

Next we give the definitions of the first Maxwell eigenvalues and the first Stokes eigenvalue. Throughout this paper, we always make the following two assumptions on the domain $\O$ when we talk about the first Maxwell eigenvalues.
\begin{enumerate}
\item[$(a)$] $\Omega\subset\mathbb{R}^n$ ($n=2,3$) is a bounded domain with $C^{1,1}$ boundary $\partial\Omega$, and $\Omega$ is locally situated on one side of $\partial\Omega$; $\partial\Omega$ has $m+1$ connected components $\Gamma_0$, $\Gamma_1$, $\cdots$, $\Gamma_m$, where $\Gamma_0$ denotes the boundary
of the infinite connected component of $\mathbb{R}^n\backslash\overline{\Omega}$.
\item[$(b)$] The domain $\Omega$ which can be multiply connected, is made simply connected by $N$ regular cuts $\Sigma_1$, $\Sigma_2$, $\cdots$, $\Sigma_N$ which are of class $C^2$; the $\Sigma_i$, $i=1,2,\cdots,N$ satisfying $\Sigma_i\cap\Sigma_j=\emptyset$ for $i\neq j$ are non-tangential to $\partial\Omega$. Hence the open set $\dot{\Omega}=\Omega\backslash\Sigma$ (with $\Sigma=\cup_{i=1}^N\Sigma_i$) is simply connected and Lipschitz.
\end{enumerate}
We say that $\O$ is simply connected if $N=0$, and $\O$ has no holes if $m=0$.
We denote
$$
\aligned
&\Bbb H_1(\O)=\{\u\in L^2(\O,\Bbb R^n): \curl\u=\0,\; \divg\u=0\text{ in $\O$,}\; \u\cdot\nu=0 \text{ on $\p\O$}\},\\
&\Bbb H_2(\O)=\{\u\in L^2(\O,\Bbb R^n): \curl\u=\0,\;\divg\u=0\text{ in $\O$,}\; \u_T=\0 \text{ on $\p\O$}\},
\endaligned
$$
where $\u_T=\u-(\u\cdot\nu)\nu$. Let $k=n(n-1)/2$, where $n=2,3$. We denote
\begin{equation*}
\aligned
& A=\{\u\in L^2(\O,\mathbb{R}^n)\cap \mathbb{H}_2(\O)^\perp:\divg\u=0 \text{ in $\O$, } \curl\u\in L^2(\O,\mathbb{R}^k),\; \u_T=\0\text{ on $\p\O$}\},\\
& B=\{\u\in L^2(\O,\mathbb{R}^n)\cap \mathbb{H}_1(\O)^\perp:\divg\u=0\text{ in $\O$, }\curl\u\in L^2(\O,\mathbb{R}^k),\; \u\cdot\nu=0\text{ on $\p\O$}\},\\
& C=\{\u\in H_0^1(\O,\mathbb{R}^n):\divg\u=0\text{ in $\O$}\}.
\endaligned
\end{equation*}
The first Maxwell eigenvalue under tangent boundary condition is defined by
$$\alpha_1=\inf_{\0\not\equiv\u\in A}I(\u),$$
and the first Maxwell eigenvalue under normal boundary condition is defined by
$$\beta_1=\inf_{\0\not\equiv\u\in B}I(\u),$$
where
$$I(\u)=\frac{\int_{\O}|\curl\u|^2dx}{\int_{\O}|\u|^2dx}.$$
For some properties related to Maxwell eigenvalue problems, see \cite{CD1999,Yin2012}. The first Stokes eigenvalue is defined by
$$\gamma_1=\inf_{\0\not\equiv\u\in C}\frac{\int_{\O}|\nabla\u|^2dx}{\int_{\O}|\u|^2dx}.$$
For three-dimensional bounded domain, it holds that $\alpha_1=\beta_1$, see \cite[p.440]{Pauly}. Moreover, if the domain is bounded and convex, Pauly \cite[Lemma 4]{Pauly2015} proved that $\alpha_1=\beta_1\geq \mu_2$, where $\mu_2$ is the second eigenvalue of $-\Delta$ under the Neumann boundary condition. For two-dimensional bounded domain, Kelliher\cite[Theorem 1.1]{Kelliher} showed that $\gamma_1>\lambda_1$, where $\lambda_1$ is the first eigenvalue of $-\Delta$ under the Dirichlet boundary condition.

In the last part of this section, we establish two lemmas which play a key role in the proofs of the main theorems. This is inspired by the work of ~\cite[p.180]{Va}.

\begin{Lem}\label{Lem-2.2}
  Let $D$ be a bounded Lipschitz domain in $\R^3$, $\u\in H^1(D,\R^3)$ and $\varphi$ be smooth in $\overline{D}$. Then it holds that
 \begin{equation}\label{eq2.1}
  \aligned
  &\int_D\curl\u\times\u\cdot\d\varphi\,dx+\int_D(\divg\u)(\u\cdot\d\varphi)\,dx \\
=&\int_D\frac{|\u|^2}{2}\Delta\varphi-\sum_{i,j=1}^3u_iu_j\frac{\p^2\varphi}{\p x_i\p x_j}\,dx+\int_{\p D}\frac{|\u|^2}{2}\d\varphi\cdot\nu-(\u\times\d\varphi)\cdot(\u\times\nu)\,dS.
  \endaligned
\end{equation}
\end{Lem}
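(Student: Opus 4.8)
The plan is to establish formula \eqref{eq2.1} first for $\u\in C^\infty(\overline D,\R^3)$ and then recover the general case $\u\in H^1(D,\R^3)$ by a density argument. Every integrand occurring in \eqref{eq2.1} is quadratic in $\u$ and its first derivatives, with $\varphi$ fixed and smooth, so in dimension three the Sobolev embedding $H^1(D)\hookrightarrow L^6(D)$ controls all the volume integrands while the trace embedding $H^1(D)\hookrightarrow L^4(\p D)$ controls the boundary ones; consequently both sides of \eqref{eq2.1} depend continuously on $\u$ in the $H^1(D,\R^3)$ topology, and since $C^\infty(\overline D,\R^3)$ is dense in $H^1(D,\R^3)$ it suffices to prove the identity for smooth $\u$. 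On the bounded Lipschitz domain $D$ the divergence theorem holds, which is all that the smooth computation requires.

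For smooth $\u$ I would start from the pointwise vector identity $\curl\u\times\u=(\u\cdot\d)\u-\frac{1}{2}\d|\u|^2$ recalled in the Introduction, take the scalar product with $\d\varphi$, add $(\divg\u)(\u\cdot\d\varphi)$, and use the Leibniz rule $u_j\,\p_j u_i+u_i\,\p_j u_j=\p_j(u_iu_j)$ to arrive at the pointwise identity
$$(\curl\u\times\u)\cdot\d\varphi+(\divg\u)(\u\cdot\d\varphi)=\sum_{i,j=1}^3\frac{\p}{\p x_j}(u_iu_j)\,\frac{\p\varphi}{\p x_i}-\frac{1}{2}\,\d|\u|^2\cdot\d\varphi.$$
Integrating over $D$ and integrating by parts term by term, moving $\p/\p x_j$ off $u_iu_j$ produces
$$\int_D\sum_{i,j=1}^3\frac{\p}{\p x_j}(u_iu_j)\,\frac{\p\varphi}{\p x_i}\,dx=-\int_D\sum_{i,j=1}^3u_iu_j\,\frac{\p^2\varphi}{\p x_i\p x_j}\,dx+\int_{\p D}(\u\cdot\d\varphi)(\u\cdot\nu)\,dS,$$
while integrating $-\frac{1}{2}\d|\u|^2\cdot\d\varphi$ by parts once more gives $\frac{1}{2}\int_D|\u|^2\Delta\varphi\,dx-\frac{1}{2}\int_{\p D}|\u|^2\,\d\varphi\cdot\nu\,dS$. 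Summing the contributions reproduces the right-hand side of \eqref{eq2.1}, except that the boundary integrand comes out in the form $(\u\cdot\d\varphi)(\u\cdot\nu)-\frac{1}{2}|\u|^2\,\d\varphi\cdot\nu$.

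The final step is purely algebraic. On $\p D$, applying the Lagrange (Binet--Cauchy) identity $(\mathbf a\times\mathbf b)\cdot(\mathbf c\times\mathbf e)=(\mathbf a\cdot\mathbf c)(\mathbf b\cdot\mathbf e)-(\mathbf a\cdot\mathbf e)(\mathbf b\cdot\mathbf c)$ with $\mathbf a=\mathbf c=\u$, $\mathbf b=\d\varphi$ and $\mathbf e=\nu$ gives $(\u\times\d\varphi)\cdot(\u\times\nu)=|\u|^2\,\d\varphi\cdot\nu-(\u\cdot\nu)(\u\cdot\d\varphi)$, and hence
$$(\u\cdot\d\varphi)(\u\cdot\nu)-\frac{1}{2}|\u|^2\,\d\varphi\cdot\nu=\frac{1}{2}|\u|^2\,\d\varphi\cdot\nu-(\u\times\d\varphi)\cdot(\u\times\nu),$$
which is exactly the boundary integrand appearing in \eqref{eq2.1}. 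None of the steps is deep; the points that require care are the bookkeeping of signs in the vector identity and in the two integrations by parts, and spotting the rewriting of the boundary term through the Lagrange identity --- elementary, but the only genuinely non-obvious move. The density reduction carried out at the outset is routine, but should be recorded since it is what allows the hypothesis $\u\in H^1(D,\R^3)$ rather than $\u\in C^1(\overline D,\R^3)$.
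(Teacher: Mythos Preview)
Your proof is correct and takes a genuinely different route from the paper's. The paper begins with the scalar triple product rewrite $\curl\u\times\u\cdot\d\varphi=(\u\times\d\varphi)\cdot\curl\u$, applies the Green formula for $\curl$ to move the curl onto $\u\times\d\varphi$, and then expands $\curl(\u\times\d\varphi)$ via the identity $\curl(\mathbf a\times\mathbf b)=(\divg\mathbf b)\mathbf a-(\divg\mathbf a)\mathbf b+(\mathbf b\cdot\d)\mathbf a-(\mathbf a\cdot\d)\mathbf b$; one more integration by parts on the term $(\d\varphi\cdot\d)\u\cdot\u=\d\varphi\cdot\d\frac{|\u|^2}{2}$ finishes the computation, and the boundary term $-(\u\times\d\varphi)\cdot(\u\times\nu)$ falls out directly from the curl Green formula without any algebraic conversion. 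Your approach instead passes through the Euler stress-tensor form $\p_j(u_iu_j)$ and uses only the ordinary divergence theorem, at the price of having to invoke the Lagrange identity at the end to recast the boundary integrand. Both arguments are short; yours is arguably more elementary (no curl integration-by-parts formula needed), and you also supply the density step from $C^\infty(\overline D)$ to $H^1(D)$ that the paper leaves implicit.
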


\begin{proof}
Applying the Green's formula for the operator $\curl$, we find that
\begin{equation}\label{eq-1}
  \aligned
  &\int_D\curl\u\times\u\cdot\d\varphi\,dx=\int_D \u\times\d\varphi\cdot\curl\u\,dx\\
  =&\int_D\u\cdot\curl(\u\times\d\varphi)\,dx+\int_{\p D}\u\times(\u\times\d\varphi)\cdot\nu\,dS\\
  =&\int_D\u\cdot\left[\divg(\d\varphi)\u-(\divg\u)\d\varphi+(\d\varphi\cdot\d)\u-(\u\cdot\d)\d\varphi\right]\,dx\\
  &-\int_{\p D}(\u\times\d\varphi)\cdot(\u\times\nu)\,dS.
  \endaligned
\end{equation}
Here we have used the identity
$$\curl (\mathbf{a}\times \mathbf{b})=(\divg \mathbf{b})\mathbf{a}-(\divg \mathbf{a})\mathbf{b}+(\mathbf{b}\cdot\nabla) \mathbf{a}-(\mathbf{a}\cdot\nabla) \mathbf{b},$$
where $\mathbf{a}$ and $\mathbf{b}$ are two vector fields.

Noting that
$$\aligned
\int_D(\d\varphi\cdot\d)\u\cdot\u\,dx=\int_D\d\varphi\cdot\d\frac{|\u|^2}{2}\,dx=\int_{\p D}\frac{|\u|^2}{2}\d\varphi\cdot\nu\,dS-\int_D\frac{|\u|^2}{2}\Delta\varphi\,dx,
\endaligned
$$
and substituting this into~\eqref{eq-1}, we get
\begin{equation}
  \aligned
  &\int_D\curl\u\times\u\cdot\d\varphi\,dx+\int_D(\divg\u)(\u\cdot\d\varphi)\,dx\\
  =&\int_D\frac{|\u|^2}{2}\Delta\varphi-(\u\cdot\d)\d\varphi\cdot\u\,dx+\int_{\p D}\frac{|\u|^2}{2}\d\varphi\cdot\nu-(\u\times\d\varphi)\cdot(\u\times\nu)\,dS\\
  =&\int_D\frac{|\u|^2}{2}\Delta\varphi-\sum_{i,j=1}^3u_iu_j\frac{\p^2\varphi}{\p x_i\p x_j}\,dx+\int_{\p D}\frac{|\u|^2}{2}\d\varphi\cdot\nu-(\u\times\d\varphi)\cdot(\u\times\nu)\,dS.
  \endaligned
\end{equation}
Thus, \eqref{eq2.1} holds.
\end{proof}

\begin{Lem}\label{one-ball}
Let $D$ be a bounded Lipschitz domain in $\mathbb{R}^3$. Assume that $\0\notin \overline{D}$ and $\u\in H^1(D,\R^3)$. Then for any $\alpha\geq 0$, it holds that
\begin{equation}\label{weight-0}
\aligned
&\int_{D}\curl\mathbf{u}\times\mathbf{u}\cdot \frac{x}{|x|^\alpha}+\left(\mathbf{u}\cdot \frac{x}{|x|^\alpha}\right)\divg \mathbf{u}\,dx=\\
&\int_{D}\frac{1}{|x|^\alpha}\left[\frac{1-\alpha}{2}|\mathbf{u}|^2
+\alpha\left(\mathbf{u}\cdot\frac{x}{|x|}\right)^2\right]\,dx+ \int_{\partial D}\left(\mathbf{u}\cdot \frac{x}{|x|^\alpha}\right)(\mathbf{u}\cdot \nu)-\frac{|\mathbf{u}|^2}{2}\left(\frac{x}{|x|^\alpha}\cdot\nu\right)\,dS=\\
&\int_{D}\frac{1}{|x|^\alpha}\left[\frac{1-\alpha}{2}|\mathbf{u}|^2
+\alpha\left(\mathbf{u}\cdot\frac{x}{|x|}\right)^2\right]dx+\int_{\partial D}\frac{|\mathbf{u}|^2}{2}\left(\frac{x}{|x|^\alpha}\cdot\nu\right)-\left(\mathbf{u}\times \frac{x}{|x|^\alpha}\right)\cdot(\mathbf{u}\times\nu)dS.
\endaligned
\end{equation}
\end{Lem}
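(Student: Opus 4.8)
The plan is to apply Lemma \ref{Lem-2.2} with the specific choice $\varphi(x) = \varphi_\alpha(x)$ where $\nabla\varphi_\alpha = x/|x|^\alpha$, which exists precisely because $\0\notin\overline{D}$ so that $|x|^{-\alpha}$ is smooth on a neighborhood of $\overline{D}$. Explicitly one may take $\varphi_\alpha(x) = |x|^{2-\alpha}/(2-\alpha)$ when $\alpha\neq 2$ and $\varphi_2(x) = \log|x|$; in either case $\nabla\varphi_\alpha = x|x|^{-\alpha}$, which is all that is needed. The left-hand side of \eqref{eq2.1} then matches the left-hand side of \eqref{weight-0} verbatim after this substitution, so everything reduces to computing the two interior integrands $\tfrac12|\u|^2\Delta\varphi_\alpha - \sum_{i,j}u_iu_j\,\p^2_{ij}\varphi_\alpha$ and the boundary integrand, and recognizing them in the stated form.

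The main work is the pointwise Hessian computation. First I would record $\p_i\varphi_\alpha = x_i|x|^{-\alpha}$, hence $\p^2_{ij}\varphi_\alpha = \delta_{ij}|x|^{-\alpha} - \alpha x_ix_j|x|^{-\alpha-2}$. Tracing gives $\Delta\varphi_\alpha = (3-\alpha)|x|^{-\alpha}$ in $\R^3$. Contracting the Hessian against $u_iu_j$ gives $\sum_{i,j}u_iu_j\p^2_{ij}\varphi_\alpha = |\u|^2|x|^{-\alpha} - \alpha|x|^{-\alpha-2}(\u\cdot x)^2 = |x|^{-\alpha}\bigl[|\u|^2 - \alpha(\u\cdot x/|x|)^2\bigr]$. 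Therefore the interior integrand becomes
\[
\frac{|\u|^2}{2}(3-\alpha)|x|^{-\alpha} - |x|^{-\alpha}\Bigl[|\u|^2 - \alpha\bigl(\u\cdot\tfrac{x}{|x|}\bigr)^2\Bigr] = |x|^{-\alpha}\Bigl[\frac{1-\alpha}{2}|\u|^2 + \alpha\bigl(\u\cdot\tfrac{x}{|x|}\bigr)^2\Bigr],
\]
which is exactly the interior term in \eqref{weight-0}. For the boundary, Lemma \ref{Lem-2.2} produces $\int_{\p D}\tfrac12|\u|^2\,\nabla\varphi_\alpha\cdot\nu - (\u\times\nabla\varphi_\alpha)\cdot(\u\times\nu)\,dS$; substituting $\nabla\varphi_\alpha = x|x|^{-\alpha}$ gives directly the last boundary expression in \eqref{weight-0}, namely $\int_{\p D}\tfrac12|\u|^2\,(x|x|^{-\alpha}\cdot\nu) - (\u\times x|x|^{-\alpha})\cdot(\u\times\nu)\,dS$.

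To obtain the middle line of \eqref{weight-0} — the alternative boundary form with $(\u\cdot x|x|^{-\alpha})(\u\cdot\nu)$ — I would invoke the elementary vector identity $(\a\times\b)\cdot(\a\times\c) = |\a|^2(\b\cdot\c) - (\a\cdot\b)(\a\cdot\c)$ with $\a=\u$, $\b = x|x|^{-\alpha}$, $\c=\nu$. This yields $(\u\times x|x|^{-\alpha})\cdot(\u\times\nu) = |\u|^2(x|x|^{-\alpha}\cdot\nu) - (\u\cdot x|x|^{-\alpha})(\u\cdot\nu)$, and substituting this into the last boundary integral converts it into the middle one. Thus the two boundary representations are equal identically on $\p D$, not just after integration. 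The only genuine subtlety is regularity: one must ensure Lemma \ref{Lem-2.2} applies, i.e. that $\varphi_\alpha$ is smooth on $\overline{D}$, which holds since $\0\notin\overline{D}$ makes $|x|$ bounded away from zero there; and that $\u\in H^1(D,\R^3)$ suffices for all the integrations by parts inside Lemma \ref{Lem-2.2}, which is already built into that lemma's hypotheses. No further obstacle is anticipated — the result is a direct specialization of Lemma \ref{Lem-2.2} combined with one standard cross-product identity.
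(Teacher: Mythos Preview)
Your proposal is correct and follows essentially the same route as the paper: apply Lemma~\ref{Lem-2.2} with $\varphi=|x|^{2-\alpha}/(2-\alpha)$ (or $\log|x|$ when $\alpha=2$), compute $\Delta\varphi=(3-\alpha)|x|^{-\alpha}$ and $\partial^2_{ij}\varphi=\delta_{ij}|x|^{-\alpha}-\alpha x_ix_j|x|^{-\alpha-2}$ to obtain the interior term, and then use the Lagrange identity $(\u\times x)\cdot(\u\times\nu)=|\u|^2(x\cdot\nu)-(\u\cdot x)(\u\cdot\nu)$ to pass between the two boundary forms. Nothing is missing.
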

 Note that the condition $\0\notin\overline{D}$ is only needed when $\a>0$.

\begin{proof}
The result follows immediately from Lemma~\ref{Lem-2.2} by choosing
$$\varphi=
\begin{cases}
\frac{1}{2-\a}|x|^{2-\a} &\text{if $\a\neq 2$},\\
\ln|x|&\text{if $\a=2$}.
\end{cases}
$$
Indeed, by choosing such $\varphi$, we see that $\d\varphi=x/|x|^\a$. Moreover, we have
\begin{equation}\label{eq-3}
  \Delta\varphi=\divg\left(\frac{x}{|x|^\a}\right)=\frac{3-\a}{|x|^\a},\q \frac{\p^2 \varphi}{\p x_i\p x_j}=\frac{\p }{\p x_i}\left(\frac{x_j}{|x|^\a}\right)=\frac{\delta_{ij}}{|x|^\a} -\frac{\a x_i x_j}{|x|^{\a+2}}\q\text{for } x\neq 0.
\end{equation}
Hence
$$
\int_D\frac{|\u|^2}{2}\Delta\varphi\,dx=\frac{3-\a}{2}\int_D\frac{|\u|^2}{|x|^\a}\,dx
$$
and
$$
\int_\O \sum_{i,j=1}^3u_iu_j\frac{\p^2\varphi}{\p x_i\p x_j}\,dx=\int_D \frac{|\u|^2}{|x|^\a}-\frac{\a}{|x|^\a}\left(\u\cdot\frac{x}{|x|}\right)^2\,dx.
$$
Therefore, the second equality in~\eqref{weight-0} holds. Thanks to the fact that
\begin{equation}\label{Lagrange}
(\mathbf{u}\times x)\cdot(\mathbf{u}\times \nu)=|\mathbf{u}|^2(x\cdot\nu)-(\mathbf{u}\cdot x)(\mathbf{u}\cdot \nu),
\end{equation}
the first equality in~\eqref{weight-0} also holds.
\end{proof}

\section{Uniqueness results for Beltrami flow}

\begin{proof}[Proof of Theorem~\ref{ZZ1}]
Since $\curl\u\times\u=\0$ and $\divg\u=0$ in $\O$ and $\u\times\nu=\0$ on $\p\O$, by setting $\a=0$ and $D=\O$ in the second equality in ~\eqref{weight-0}, we find that
\begin{equation}\label{eq2.3}
0=\int_{\Omega}\frac{|\mathbf{u}|^2}{2}\,dx +\int_{\partial\Omega}\frac{|\mathbf{u}|^2}{2}(x\cdot\nu)\,dS.
\end{equation}
Since $\O$ is star-shaped, we have $x\cdot\nu\geq 0$ on $\p \O$, hence we get $\u\equiv\0$ in $\O$.
\end{proof}

\begin{proof}[Proof of Theorem \ref{ZZ2}]
We first consider case (i). Without loss of generality, we assume $\O$ is a star-shaped domain centered at the origin point.  For fixed $R>0$, we choose $r\in (0,R)$ small enough such that $\overline{B_r(\0)}\subseteq B_R(\mathbf{0})\cap\O$. Set $D=(B_R(\0)\cap \Omega)\backslash \overline{B_r(\0)}$ in Lemma \ref{one-ball}, then we get
 \begin{equation*}
  \begin{aligned}
 &\int_{(B_R(\0)\cap \Omega)\backslash \overline{B_r(\0)}}\frac{1}{|x|^\a}\left(\frac{1-\a}{2}|\u|^2+ \a\left(\u\cdot\frac{x}{|x|}\right)^2\right)dx\\
= &\int_{\partial (B_R(\0)\cap \Omega)\cup\partial B_r(\0)}\left(\mathbf{u}\times \frac{x}{|x|^\alpha}\right)(\mathbf{u}\times\nu)-\frac{|\mathbf{u}|^2}{2}\left(\frac{x}{|x|^\alpha}\cdot\nu\right)dS,
  \end{aligned}
  \end{equation*}
 from the second equality in~\eqref{weight-0}, where $0<\a<3$.
  Letting $r\rightarrow 0^+$, we find that
  \begin{equation}\label{BR-a}
  \begin{aligned}
 &\int_{B_R(\0)\cap \Omega}\frac{1}{|x|^\a}\left(\frac{1-\a}{2}|\u|^2+ \a\left(\u\cdot\frac{x}{|x|}\right)^2\right)dx\\
= &\int_{\partial (B_R(\0)\cap \Omega)}\left(\mathbf{u}\times \frac{x}{|x|^\alpha}\right)(\mathbf{u}\times\nu)-\frac{|\mathbf{u}|^2}{2}\left(\frac{x}{|x|^\alpha}\cdot\nu\right)dS.
  \end{aligned}
  \end{equation}
 Choosing $\a=1$ and using $\u\times\nu=\0$ on $\p\O$, we get
  \begin{equation}\label{BR-O}
  \aligned
 &\int_{B_R(\0)\cap \Omega}\frac{1}{|x|}\left(\u\cdot\frac{x}{|x|}\right)^2\,dx\\
 = &\int_{\p B_R(\0) \cap \Omega}\left|\u\times \frac{x}{|x|}\right|^2-\frac{|\mathbf{u}|^2}{2}dS-\int_{B_R(\0)\cap \p \O}\frac{|\mathbf{u}|^2}{2}\left(\frac{x}{|x|}\cdot\nu\right)dS.
 \endaligned
  \end{equation}
  Since $\Omega$ is star-shaped, we have $x\cdot\nu\geq 0$ on $\p\O$, thus we have
  \begin{equation}\label{tangen}
  \int_{B_R(\0) \cap \Omega}\frac{1}{|x|}\left(\u\cdot\frac{x}{|x|}\right)^2\,dx\leq \int_{\p B_R(\0) \cap \Omega}\left|\u\times \frac{x}{|x|}\right|^2-\frac{|\mathbf{u}|^2}{2}dS,
   \end{equation}
  which yields
  $$\frac{1}{|x|}\left(\u\cdot\frac{x}{|x|}\right)^2\in L^1(\O).$$
  Recall that $|\u(x)|=o(|x|^{-1})$ as $|x|\to +\infty$, thus by letting $R\to +\infty$ in ~\eqref{tangen}, we have
 $$\int_{\Omega}\frac{1}{|x|}\left(\u\cdot\frac{x}{|x|}\right)^2\,dx=0.$$
  Hence $\u\cdot x\equiv0$ in $\O$. Substitute it into \eqref{BR-O}, we get
 \begin{equation}\label{eq-3.4}
 \int_{B_R(\0)\cap \p \O}\frac{|\mathbf{u}|^2}{2}\left(\frac{x}{|x|}\cdot\nu\right)dS=\int_{\p B_R(\0) \cap \Omega}\left|\u\times \frac{x}{|x|}\right|^2-\frac{|\mathbf{u}|^2}{2}dS.
 \end{equation}
 Letting $R\to+\infty$ and using the fact $|\u(x)|=o(|x|^{-1})$ as $|x|\to+\infty$ again, we find that
  $$
 \int_{\p \Omega}\frac{|\mathbf{u}|^2}{2}\left(\frac{x}{|x|}\cdot\nu\right)dS=0.
  $$
  Since $x\cdot\nu\geq 0$ on $\p\O$, then $|\u|^2(x\cdot\nu)\equiv 0$ on $\p\O$. Substituting this into~\eqref{eq-3.4}, we find that
  $$
 \int_{\p B_R(\0) \cap \Omega}\frac{|\mathbf{u}|^2}{2}dS=0,
  $$
 here we have used Lagrange's identity
 $$|\u|^2=\left|\u\times \frac{x}{|x|}\right|^2+\left|\u\cdot \frac{x}{|x|}\right|^2=\left|\u\times \frac{x}{|x|}\right|^2.$$
 Hence we have $\u\equiv \0$ on $\p B_R(\0)\cap \Omega$ for any $R>0$. Substituting these conclusions into
 \eqref{BR-a} with $\a\in (1,3)$, it implies that
 $$\frac{1-\a}{2}\int_{B_R(\0)\cap \Omega}\frac{|\u|^2}{|x|^\a}dx=0\q \text{for all }R>0.
 $$
Thus $\u\equiv\0$ in $\O$.

  \vskip0.05in

  Now we consider case (ii). Using the fact that $\divg\u=0$, $\curl\u\times \u=\0$ in $\O$ and $\u\cdot\nu=0$ on $\p\O$, and by the first equality in Lemma \ref{one-ball} with $\a=1$ and $D=B_R(\0)\cap\O$, we have
  $$\aligned
  \int_{B_R(\0) \cap \Omega}\frac{1}{|x|}\left(\u\cdot\frac{x}{|x|}\right)^2\,dx=&\int_{\p B_R(\0) \cap \Omega}\frac{|\u|^2}{2}-\left(\u\cdot\frac{x}{|x|}\right)^2dS\\
  &+\int_{B_R(\0)\cap \p \Omega} \frac{|\u|^2}{2}\left(\frac{x}{|x|}\cdot\nu\right)\,dS.
  \endaligned$$
   Recall that $\O$ is the complement of the closure of a star-shaped domain, we have $x\cdot\nu\leq 0$ on $\p\O$. Therefore,
  $$
  \int_{B_R(\0) \cap \Omega}\frac{1}{|x|}\left(\u\cdot\frac{x}{|x|}\right)^2\,dx\leq\int_{\p B_R(\0) \cap \Omega}\frac{|\u|^2}{2}-\left(\u\cdot\frac{x}{|x|}\right)^2dS.
  $$
 Then, similar to the proof in case (i), we deduce that $\u\equiv\0$ in $\O$.
\end{proof}

\begin{Rem}
  If we require $\u=\0$ on $\p\O$, then we can drop the assumptions on the shape of the domain in Theorem ~\ref{ZZ2}. In fact, as the proof of Theorem~\ref{ZZ2} shows, we can get~\eqref{BR-a} without using the boundary condition and the assumption of ``star-shaped". Therefore, starting with ~\eqref{BR-a} and using $\u=\0$ on $\p\O$, we can see that
  $$
  \int_{B_R(\0)\cap \Omega}\frac{1}{|x|}\left(\u\cdot\frac{x}{|x|}\right)^2\,dx= \int_{\p B_R(\0) \cap \Omega}\left|\u\times \frac{x}{|x|}\right|^2-\frac{|\mathbf{u}|^2}{2}dS.
  $$
  Then, similar to the arguments in the lines after~\eqref{tangen}, we finally get $\u\equiv\0$ in $\O$.
\end{Rem}

\section{Maxwell and Stokes eigenvalue problems}

\begin{proof}[Proof of Theorem \ref{ZZ3}]
First, we claim that a bounded and star-shaped $C^{1,1}$ domain must be simply connected and have no holes. From the definition of the star-shaped domain, it is easy to find that any closed curve is homotopic to the center point of the star-shaped domain. So $\O$ is simply-connected. Hence~$\mathbb{H}_1(\O)=\{\0\}$. On the other hand, $\mathbb{H}_2(\O)=\{\0\}$. In fact, for any $\h\in\mathbb{H}_2(\O)$, $\h$ satisfies the following equations
\begin{equation*}
\begin {cases}
\curl\mathbf{h}\times\mathbf{h}=\mathbf{0}& \text{ \rm in } \Omega,\\
\divg\mathbf{h}=0 &\text{ \rm in } \Omega,\\
\mathbf{h}\times\nu=\mathbf{0} &\text{ \rm on } \partial\Omega.\\
\end {cases}
\end{equation*}
By Theorem \ref{ZZ1}, we have $\h=\0$. Thus $\O$ has no holes.

By \cite[p.209, Theorem 3]{DL1990} and the fact that the embedding $H^1(\O,\mathbb{R}^3)\hookrightarrow L^2(\O,\mathbb{R}^3)$ is compact, $\alpha_1,\beta_1,\gamma_1$ can be attained. Thanks to Poincar\'{e} type inequalities, $\alpha_1,\beta_1,\gamma_1$ are positive. In order to prove $\alpha_1=\beta_1<\gamma_1$, first we show~$\alpha_1=\beta_1\leq \gamma_1$, and then we show~$\alpha_1=\beta_1\neq\gamma_1$.
Since
$$\gamma_1=\inf_{\0\not\equiv\u\in C}\frac{\int_{\O}|\nabla\u|^2dx}{\int_{\O}|\u|^2dx}=\inf_{\0\not\equiv\u\in C}\frac{\int_{\O}|\curl\u|^2dx}{\int_{\O}|\u|^2dx},$$
and $C=A\cap B$, it is easy to find~$\alpha_1=\beta_1\leq \gamma_1$. We prove~$\alpha_1=\beta_1\neq\gamma_1$ by contradiction. We assume that~$\alpha_1=\beta_1=\gamma_1$ holds. Since~$\gamma_1$~can be attained, there exists~$\0\not\equiv\u\in C$~such that~$I(\u)=\gamma_1$. Hence~$I(\u)=\beta_1$ and~$\u$~satisfies the following Euler-Lagrange equations
\begin{equation*}
\begin {cases}
\curl\curl \u=\beta_1\u & \text{ \rm in }\O,\\
\divg\u=0 &\text{ \rm in }  \O,\\
\u\cdot\nu=0 &\text{ \rm on }  \p\O,\\
\curl\u\times\nu=\0 &\text{ \rm on }  \p\O.\\
\end {cases}
\end{equation*}
Since~$\divg(\curl\u)=0$ in $\O$, $\curl(\curl\u)=\beta_1\u$ in $\O$, $\curl\u\times\nu=\0$ on $ \p\O$,
by the regularity theory for the $\divg$-$\curl$ system we get~$\curl\u\in H^1(\O,\mathbb{R}^3)$. Applying Lemma \ref{one-ball}, we obtain the following two equalities£º
$$\int_{\O}\curl\u\times\u\cdot xdx=\int_{\O}\frac{|\u|^2}{2}dx-\int_{\p\O}\frac{|\u|^2}{2}(x\cdot\nu)dS,$$
$$\int_{\O}\curl(\curl\u)\times\curl\u\cdot xdx=\int_{\O}\frac{|\curl\u|^2}{2}dx+\int_{\p\O}\frac{|\curl\u|^2}{2}(x\cdot\nu)dS.$$
Combining the above two equalities with the following equality
$$\int_{\O}\curl(\curl\u)\times\curl\u\cdot xdx=-\beta_1\int_{\O}\curl\u\times\u\cdot xdx,$$
we have
\begin{equation}\label{eigen-beta}
\int_{\O}|\curl\u|^2+\beta_1|\u|^2dx+\int_{\p\O}|\curl\u|^2(x\cdot\nu)dS=\beta_1\int_{\p\O}|\u|^2(x\cdot\nu)dS.
\end{equation}
Since~$\u=\0$ on $\p\O$ and $x\cdot\nu\geq0$ on $\p\O$, the above equality implies~$\u\equiv\0$, which is a contradiction.

Next we prove the properties of the first Maxwell eigenfunctions and the first Stokes eigenfunctions. Let $\u$ be a first Maxwell eigenfunction under tangent boundary condition. Assume $\u\cdot\nu=0$ on $\p\O$. Then~$\u=\0$ on $\p\O$. It follows~$\gamma_1\leq\alpha_1$, which is a contradiction. Hence~$\u\cdot\nu\not\equiv 0$ on $\p\O$. We can derive that~$\u$~satisfies the Euler-Lagrange equations
\begin{equation*}
\begin {cases}
\curl\curl \u=\alpha_1\u & \text{ \rm in }\O,\\
\divg\u=0 &\text{ \rm in }  \O,\\
\u\times\nu=\0 &\text{ \rm on }  \p\O.\\
\end {cases}
\end{equation*}
Since~$\divg(\curl\u)=0$ in $\O$, $\curl(\curl\u)=\alpha_1\u$ in $\O$, $\curl\u\cdot\nu=0$ on $ \p\O$,
by the regularity theory for the $\divg$-$\curl$ system we get~$\curl\u\in H^1(\O,\mathbb{R}^3)$. Applying Lemma \ref{one-ball}, we have the following two equalities:
$$\int_{\O}\curl\u\times\u\cdot xdx=\int_{\O}\frac{|\u|^2}{2}dx+\int_{\p\O}\frac{|\u|^2}{2}(x\cdot\nu)dS,$$
$$\int_{\O}\curl(\curl\u)\times\curl\u\cdot xdx=\int_{\O}\frac{|\curl\u|^2}{2}dx-\int_{\p\O}\frac{|\curl\u|^2}{2}(x\cdot\nu)dS.$$
From the above two equalities and the following equality
$$\int_{\O}\curl(\curl\u)\times\curl\u\cdot xdx=-\alpha_1\int_{\O}\curl\u\times\u\cdot xdx,$$
we obtain
$$\int_{\O}|\curl\u|^2+\alpha_1|\u|^2dx+\alpha_1\int_{\p\O}|\u|^2(x\cdot\nu)dS=\int_{\p\O}|\curl\u|^2(x\cdot\nu)dS.$$
From the above equality we see~$\curl\u\times\nu\not\equiv \0$ on $\p\O$.

Let $\u$ be a first Maxwell eigenfunction under normal boundary condition. From \eqref{eigen-beta}, we find $\u\times\nu\not\equiv \0$ on $\p\O$. Since $\curl\u$ is a first Maxwell eigenfunction under tangent boundary condition, it follows that $\curl\u\cdot\nu\not\equiv 0$ on $\p\O$.

Finally, we show that any first Stokes eigenfunction~$\u$ satisfies~$\curl\u\times\nu\not\equiv\0$ on $\p\O$. We can derive that~$\u$~satisfies the Euler-Lagrange equations
\begin{equation*}
\begin {cases}
-\Delta\u+\nabla \pi=\gamma_1\u & \text{ \rm in }\O,\\
\divg\u=0 &\text{ \rm in }  \O,\\
\u=\0 &\text{ \rm on }  \p\O.\\
\end {cases}
\end{equation*}
By the regularity theory for the Stokes equations, we have~$(\u,\pi)\in H^2(\O,\mathbb{R}^3)\times H^1(\O)$. Assume $\curl\u\times\nu=\0$ on $\p\O$. Since~$\curl\curl\u=-\Delta\u+\nabla(\divg\u)=\gamma_1\u-\nabla \pi$, we have
$$\frac{\p\pi}{\p\nu}=\gamma_1\u\cdot\nu-\curl\curl\u\cdot\nu=0\text{ on $\p\O$.}$$
Hence~$\pi$~satisfies the equation
\begin{equation*}
\begin {cases}
-\Delta\pi=0 & \text{ \rm in }\O,\\
\,\,\frac{\p\pi}{\p\nu}=0 &\text{ \rm on }  \p\O,\\
\end {cases}
\end{equation*}
Obviously, $\pi$ must be a constant. Thus $\u$~also satisfies the equations
\begin{equation*}
\begin {cases}
\curl\curl \u=\gamma_1\u & \text{ \rm in }\O,\\
\divg\u=0 &\text{ \rm in }  \O,\\
\u\cdot\nu=0 &\text{ \rm on }  \p\O,\\
\curl\u\times\nu=\0 &\text{ \rm on }  \p\O.\\
\end {cases}
\end{equation*}
Hence we can establish an equality similar to \eqref{eigen-beta}. The rest of proof is similar to the proof of (ii).
\end{proof}

\begin{Rem}
For three-dimensional bounded domain, we can give a new proof of $\alpha_1=\beta_1$. Indeed, first we prove $\beta_1\leq\alpha_1$. We assume $\0\not\equiv\u\in A$ attains the minimum with $I(\u)=\alpha_1$. Then $\u$ satisfies the following Euler-Lagrange equations
\begin{equation*}
\begin {cases}
\curl\curl\u=\alpha_1\u & \text{ \rm in } \Omega,\\
\divg\u=0 &\text{ \rm in } \Omega,\\
\u\times\nu=\0 &\text{ \rm on } \partial\Omega.
\end {cases}
\end{equation*}
By the regularity theory for the $\divg$-$\curl$ system, we get $\curl\u \in H^1(\Omega,\mathbb{R}^3)$. Since $\u\times\nu=\0$ on $\p\O$, we have $\0\not\equiv\curl\u\in B$. Hence it holds that
$$\beta_1\leq \frac{\int_{\Omega}|\curl \curl\u|^{2}dx}{\int_{\Omega}|\curl\u|^{2}dx}=
\frac{\int_{\Omega}|\alpha_1\u|^{2}dx}{\alpha_1\int_{\Omega}|\u|^{2}dx}=\alpha_1.$$
Similarly, we obtain $\alpha_1\leq\beta_1$. Thus $\alpha_1=\beta_1$.

\end{Rem}

\begin{Rem}
Let~$\O$~be a bounded $C^{1,1}$ domain in $\mathbb{R}^2$. Then we have $\alpha_1=\mu_2$, $\beta_1=\lambda_1$.
Hence $\alpha_1<\beta_1<\gamma_1$.
\end{Rem}
\begin{proof}
Since~$\alpha_1$~can be attained, there exists~$\0\not\equiv\u\in A$~such that~$I(\u)=\alpha_1$. Hence~$\u$~satisfies the following Euler-Lagrange equations
\begin{equation*}
\begin {cases}
\Curl\curl \u=\alpha_1\u & \text{ \rm in }\O,\\
\divg\u=0 &\text{ \rm in }  \O,\\
\u\times\nu=0 &\text{ \rm on }  \p\O,\\
\end {cases}
\end{equation*}
here the operator $\Curl$ is defined by
$$\Curl w=(\frac{\p w}{\p x_2},-\frac{\p w}{\p x_1})\text{ for all $w\in \mathcal{D}'(\O)$.}$$
By Corollary 5' in \cite[p.224]{DL1990}, there exist $p\in H^1_0(\O)$, $\mathbf{h}\in \mathbb{H}_2(\O)$ and $\varphi\in H^1(\O)\cap \mathbb{R}^\perp$ such that $\u=\d p+\mathbf{h}+\Curl\varphi$. Since $\divg \u=0$ in $\O$ and $\u,\Curl\varphi\in\mathbb{H}_2(\O)^\perp$, we see that $p=0$ and $\mathbf{h}=\0$. Hence $\Curl(\curl\u-\alpha_1\varphi)=\Curl\curl \u-\alpha_1\u=\0$. Thus there exists a constant $C$ such that $\curl\u-\alpha_1\varphi=C$. Integrating this equality over $\O$, we get $C=0$. Consequently, $\varphi\in H^1(\O)\cap \mathbb{R}^\perp$ satisfies
\begin{equation*}
\begin {cases}
-\Delta\varphi=\alpha_1\varphi & \text{ \rm in }\O,\\
\frac{\p\varphi}{\p\nu}=\u\times\nu=0&\text{ \rm on }  \p\O.\\
\end {cases}
\end{equation*}
This means that $\varphi$ must be an eigenfunction of $-\Delta$ under Neumann boundary condition. Therefore $\alpha_1\geq \mu_2$. On the other hand, since $\0\not\equiv\Curl\varphi\in A$, where $\varphi\in H^1(\O)\cap \mathbb{R}^\perp$ is a solution of the following equation
\begin{equation*}
\begin {cases}
-\Delta\varphi=\mu_2\varphi & \text{ \rm in }\O,\\
\frac{\p\varphi}{\p\nu}=0&\text{ \rm on }  \p\O,\\
\end {cases}
\end{equation*}
we obtain
$$
\aligned
\alpha_1\leq\frac{\int_{\O}|\curl\Curl\varphi|^2dx}{\int_{\O}|\Curl\varphi|^2dx}=
\frac{\int_{\O}|-\Delta\varphi|^2dx}{\int_{\O}|\d\varphi|^2dx}
=\frac{\int_{\O}|\mu_2\varphi|^2dx}{\mu_2\int_{\O}|\varphi|^2dx}=\mu_2.
\endaligned
$$
So $\alpha_1=\mu_2$.

Next we show that $\beta_1=\lambda_1$. Since~$\beta_1$~can be attained, there exists~$\0\not\equiv\u\in B$~such that~$I(\u)=\beta_1$. Hence~$\u$~satisfies the following Euler-Lagrange equations
\begin{equation*}
\begin {cases}
\Curl\curl \u=\beta_1\u & \text{ \rm in }\O,\\
\divg\u=0 &\text{ \rm in }  \O,\\
\u\cdot\nu=0 &\text{ \rm on }  \p\O,\\
\curl\u=0 &\text{ \rm on }  \p\O.\\
\end {cases}
\end{equation*}
Indeed, by a standard argument, for all $\mathbf{v}\in H^1(\O,\mathbb{R}^2)$ satisfying $\divg \mathbf{v}=0$ in $\O$ and $\mathbf{v}\cdot\nu=0$ on $\p\O$, we have
$$\int_{\p\O}\curl\u(\mathbf{v}\times\nu)\,dS=0.$$
For any $\psi\in H^{1/2}(\p\O)$, let $\mathbf{v}\in H^1(\O,\mathbb{R}^2)$ solve the following Stokes system
\begin{equation*}
\begin {cases}
-\Delta\mathbf{v}+\d \pi=\0 & \text{ \rm in }\O,\\
\divg\mathbf{v}=0 &\text{ \rm in }  \O,\\
\mathbf{v}=\psi(\nu_2,-\nu_1) &\text{ \rm on }  \p\O.\\
\end {cases}
\end{equation*}
Hence we get
$$\int_{\p\O}\curl\u\,\psi\,dS=0\text{ for any $\psi\in H^{1/2}(\p\O)$}.$$
By Corollary 5 in \cite[p.224]{DL1990}, there exist $p\in H^1(\O)$, $\mathbf{h}\in \mathbb{H}_1(\O)$ and $\varphi\in H^1_0(\O)$ such that $\u=\d p+\mathbf{h}+\Curl\varphi$. Since $\divg \u=0$ in $\O$, $\u\cdot\nu=0$ on $\p\O$ and $\u,\Curl\varphi\in\mathbb{H}_1(\O)^\perp$, we see that $p$ is a constant and $\mathbf{h}=\0$. Hence $\Curl(\curl\u-\beta_1\varphi)=\Curl\curl \u-\beta_1\u=\0$. Thus there exists a constant $C$ such that $\curl\u-\alpha_1\varphi=C$. Since $\curl\u=\varphi=0$ on $\p\O$, we get $C=0$. Consequently, $\varphi\in H_0^1(\O)$ satisfies
\begin{equation*}
\begin {cases}
-\Delta\varphi=\beta_1\varphi & \text{ \rm in }\O,\\
\varphi=0&\text{ \rm on }  \p\O.\\
\end {cases}
\end{equation*}
Similarly, we obtain $\beta_1=\lambda_1$.

\end{proof}

\subsection*{Acknowledgements.}
The authors are grateful to their supervisor, Prof. Xingbin Pan, for guidance and constant encouragement. They would like to thank Prof. Hairong Yuan for bringing them the reference \cite{EPS16}. This work was partially supported by the National Natural Science Foundation of China grant no.11671143. Z. B. Zhang was also partly supported by Outstanding Doctoral Dissertation Cultivation Plan of Action (PY2015038).

 \vspace {0.1cm}

\begin {thebibliography}{DUMA}

\bibitem{Arnold} V. I. Arnold, B. A. Khesin, {\it Topological Methods in Hydrodynamics}, Springer, (1998).

\bibitem{CDGT} J. Cantarella, D. DeTurck, H. Gluck, M. Teytel, {\it Isoperimetric problems for the helicity of vector fields and the Biot-Savart and curl operators}, J. Math. Phys. {\bf 41} (2000), no. 8, 5615-5641.

\bibitem{CC15} D. Chae, P. Constantin, {\it Remarks on a Liouville-type theorem for Beltrami flows}, Int. Math. Res. Not. IMRN 2015, no. 20, 10012-10016.

\bibitem{ChW2016-2} D. Chae, J. Wolf, {\it On the Liouville theorem for weak Beltrami flows}, Nonlinearity {\bf 29} (2016), no. 11, 3417-3425.

\bibitem{Ch} S. Chandrasekhar, {\it On force-free magnetic fields}, Proc. Nat. Acad. Sci. U. S. A. {\bf 42} (1956), 1-5.

\bibitem{ChKe} S. Chandrasekhar, P. C. Kendall, {\it On force-free magnetic fields}, Astrophys. J. {\bf 126} (1957), 457-460.

\bibitem{CD1999} M. Costabel, M. Dauge, {\it Maxwell and Lam\'{e} eigenvalues on polyhedra}, Math. Methods Appl. Sci. {\bf 22} (1999), no. 3, 243-258.

\bibitem{DL1990} R. Dautray and J. Lions, {\it Mathematical Analysis and Numerical Methods for Science and Technology}, vol. {\bf 3}, Springer-Verlag, New York, 1990.

\bibitem{EP12} A. Enciso, D. Peralta-Salas, {\it Knots and links in steady solutions of the Euler equation}, Ann. of Math. (2) 175 (2012), no. 1, 345-367.

\bibitem{EP15} A. Enciso, D. Peralta-Salas, {\it Existence of knotted vortex tubes in steady Euler flows}, Acta Math. {\bf 214} (2015), no. 1, 61-134.

\bibitem{EP16} A. Enciso, D. Peralta-Salas, {\it Beltrami fields with a nonconstant proportionality factor are rare}, Arch. Ration. Mech. Anal. {\bf 220} (2016), no. 1, 243-260.

\bibitem{EPS16} A. Enciso, D. Poyato, J. Soler, {\it Stability results, almost global generalized Beltrami fields and applications to vortex structures in the Euler equations}, preprint, 2016.

\bibitem{Evans} L. C. Evans, {\it Partial Differential Equations}, Graduate Studies in Mathematics, 19.
 American Mathematical Society, Providence, RI, 1998.

\bibitem{Kelliher} J. P. Kelliher, {\it Eigenvalues of the Stokes operator versus the Dirichlet Laplacian in the plane}, Pacific J. Math. {\bf 244} (2010), no. 1, 99-132.

\bibitem{LLZ2015} Z. Lei, F. H. Lin, Y. Zhou, {\it Structure of helicity and global solutions of incompressible Navier-Stokes equation}, Arch. Ration. Mech. Anal. {\bf 218} (2015), no. 3, 1417-1430.

\bibitem{Nadi14} N. Nadirashvili, {\it Liouville theorem for Beltrami flow}, Geom. Funct. Anal. {\bf 24} (2014), no. 3, 916-921.

\bibitem{Pan2016} X. B. Pan, {\it Beltrami fields and large Hall parameter limit in
magnetic hydrodynamics}, preprint.

\bibitem{Pauly2015} D. Pauly, {\it On constants in Maxwell inequalities for bounded and convex domains}, J. Math. Sci. (N.Y.) {\bf 210} (2015), no. 6, 787-792.

\bibitem{Pauly} D. Pauly, {\it On the Maxwell constants in 3D}, Math. Methods Appl. Sci. {\bf 40} (2017), no. 2, 435-447.

\bibitem{Va} S. I. Vainshtein,  {\it Force-free magnetic fields with constant alpha}, Topological aspects of the dynamics of fluids and plasmas (Santa Barbara, CA, 1991), 177-193, NATO Adv. Sci. Inst. Ser. E Appl. Sci., {\bf 218}, Kluwer Acad. Publ., Dordrecht, 1992.

\bibitem{Yin2012} H. M. Yin, {\it An eigenvalue problem for $\curl\curl$ operators}, Can. Appl. Math. Q. {\bf 20} (2012), no. 3, 421-434.

\bibitem{YG90} Z. Yoshida, Y. Giga, {\it Remarks on spectra of operator rot}, Math. Z. {\bf 204} (1990), no. 2, 235-245.

\end{thebibliography}

\end {document}